\newtheorem{lemma}{Lemma}
\newtheorem{theorem}{Theorem}
\newtheorem{definition}{Definition}
\title{On Asymptotic Properties of the Separating Hill Estimator}
\author{Matias Heikkilä\footnote{Aalto University School of Science} \and Yves Dominicy\footnote{Université libre de Bruxelles --- Yves Dominicy acknowledges financial support via a Mandat de Chargé de Recherces FNRS} \and Pauliina Ilmonen\footnote{Aalto University School of Science}}
\begin{document}

\maketitle

\begin{abstract}
Modeling and understanding multivariate extreme events is challenging, but of great importance in various applications --- e.g. in biostatistics, climatology, and finance. The separating Hill estimator can be used in estimating the extreme value index of a heavy tailed multivariate elliptical distribution. We consider the asymptotic behavior of the separating Hill estimator under estimated location and scatter. The asymptotic properties of the separating Hill estimator are known under elliptical distribution with known location and scatter. However, the effect of estimation of the location and scatter has previously been examined only in a simulation study. We show, analytically, that the separating Hill estimator is consistent and asymptotically normal under estimated location and scatter, when certain mild conditions are met.
\end{abstract}

\section{Introduction}

Extreme value theory is used in understanding unlikely observations. It provides a theoretical framework that classifies distributions according to their tail behavior and explains asymptotic properties of the extreme order statistics. It also provides statistical estimators for different interesting quantities. There is a range of applications including finance, insurance, climatology and geology.

The possibility to handle multivariate observations is vital for many applications. The traditional way to approach the multivariate setting is to consider the componentwise maxima of the multivariate observations and apply the theory of the one dimensional case to the marginal distributions (see \cite{deHaan} chapter 6). \citet{Ilmonen} approach the problem in a rather different way by considering a situation where the symmetry properties of the distribution provide a natural order relation and hence a natural concept of a multivariate extreme order statistic. Namely, they consider the extreme values of elliptically distributed random variables.

Extreme value index encodes information about the tail behavior of univariate distributions. In the case of a positive extreme value index, a popular estimator for this parameter is the Hill estimator proposed in \cite{Hill}. A multivariate generalization of this, the separating Hill estimator, was recently introduced in \cite{Ilmonen}. Below we review the Hill estimator and its asymptotic properties. We also discuss the definition of a positive extreme value index of an elliptical distribution. In the remainder we settle a severe deficiency in the current asymptotic theory of the separating Hill estimator.

The value of the separating Hill estimator depends on variables that describe the location and scatter of the underlying distribution. We show that, under mild conditions, the asymptotic properties of the estimator remain unchanged if, instead of true location and scatter, the estimator is evaluated with respect to estimated location and scatter. This is vital for applications, since in practice estimated location and scatter are what one has at one's disposal.

This paper is organized as follows. In Section \ref{sec::positive} we review the relevant basics of the Hill estimator, in Section \ref{sec::known} we review the separating Hill estimator and in Section \ref{sec::estimated} we obtain its asymptotic properties under estimated location and scatter.

\section{Estimation of a positive extreme value index}\label{sec::positive}

The extreme value index yields the limiting distribution of the extreme order statistics subject to a sequence of appropriately chosen affine normalizations. For example, extreme quantile estimation is based on having an estimate of this parameter. The extreme value index naturally arises from the Fisher-Tippett-Gnedenko theorem (see \citet{fisher} and \citet{gnedenko}) which is of fundamental nature for extreme value theory.
\begin{theorem}
Let $X_n$ be a sequence of i.i.d. real-valued random variables whose cumulative distribution function is $F$. If there are sequences of real numbers $a_n > 0$, $b_n$ such that
\begin{equation*}
P\left(   \frac{ \max \left\{ X_1, \dots, X_n \right\} - b_n}{a_n}  \leq x \right) = F^n \left( a_n x + b_n \right) \to G(x)
\end{equation*}
as $n \to \infty$ for all $x\in \mathbb{R}$, then there is $\gamma \in \mathbb{R}$ such that
\begin{equation*}
G(x)=G_\gamma (x) = \exp \left( - \left(1+ \gamma x \right)^{- 1 / \gamma }\right)
\end{equation*}
for $1+ \gamma x \geq 0$ and $0$ otherwise. In the case $\gamma =0$
\begin{equation*}
G_0 (x) = \exp\left( - e^{-x} \right).
\end{equation*}
\end{theorem}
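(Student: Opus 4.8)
The plan is to prove the classical Fisher--Tippett--Gnedenko (extremal types) theorem via the convergence to types theorem together with the solution of a stability functional equation. Throughout I assume the limit $G$ is nondegenerate, since a degenerate limit carries no information and is excluded in the intended reading.

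First I would exploit the defining relation along subsequences. Fixing an integer $k \ge 1$ and letting $n \to \infty$, the hypothesis gives $F^{nk}(a_{nk}x + b_{nk}) \to G(x)$, while at the same time $F^{nk}(a_n x + b_n) = \left(F^n(a_n x + b_n)\right)^k \to G^k(x)$. Thus a single sequence $F^{nk}$ converges under two affine normalizations to the two nondegenerate limits $G$ and $G^k$. By the convergence to types theorem (Khinchin's lemma) the normalizing sequences must be asymptotically comparable: there exist $A_k > 0$ and $B_k \in \mathbb{R}$ with $a_n/a_{nk} \to A_k$ and $(b_n - b_{nk})/a_{nk} \to B_k$, yielding the stability equation
\[
G^k(x) = G(A_k x + B_k), \qquad k = 1, 2, \dots
\]

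Second, I would extract the algebraic structure of the constants. Writing $G^{kl}$ in two ways and comparing arguments gives the semigroup relations $A_{kl} = A_k A_l$ and $B_{kl} = A_k B_l + B_k = A_l B_k + B_l$. The multiplicative relation, combined with the monotonicity forced by $G$ being a distribution function, identifies $A_k$ as a power of $k$, say $A_k = k^{-\gamma}$ for a unique $\gamma \in \mathbb{R}$ (Cauchy's multiplicative equation, whose monotone solutions are exactly the power functions). Solving the companion relation for $B_k$ then splits into two cases: when $\gamma \ne 0$ one finds $B_k$ proportional to $k^{-\gamma} - 1$, and when $\gamma = 0$ (so $A_k \equiv 1$) the relation becomes additive and forces $B_k$ proportional to $\log k$.

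Finally I would substitute these back and solve for $G$ directly. Setting $\psi = -\log G$ turns the stability equation into
\[
k\,\psi(x) = \psi(A_k x + B_k),
\]
and passing to a continuous parameter, the case $\gamma \ne 0$ forces $\psi(x) = (1+\gamma x)^{-1/\gamma}$ up to an affine change of variable, while $\gamma = 0$ forces $\psi(x) = e^{-x}$; that is, $G = G_\gamma$ after absorbing the remaining location--scale freedom into the sequences $a_n, b_n$. The main obstacle is the convergence to types step: proving Khinchin's lemma rigorously---controlling both normalizing sequences through inverses of $G$ at well-chosen quantiles and ruling out escape to $0$ or $\infty$---is what makes the stability equation available, and the subsequent extension of the functional equation from integer to real arguments must lean essentially on the nondegeneracy and monotonicity of $G$.
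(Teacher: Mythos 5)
Your proposal cannot be checked against an internal argument, because the paper offers none: this statement is the classical Fisher--Tippett--Gnedenko theorem, quoted as background and attributed to Fisher--Tippett (1928) and Gnedenko (1943) without proof. Judged on its own merits, your outline is the standard extremal-types argument and is correct in structure: the double normalization of $F^{nk}$, the convergence-to-types (Khinchin) lemma yielding the max-stability equation $G^k(x) = G(A_k x + B_k)$, the semigroup relations $A_{kl} = A_k A_l$ and $B_{kl} = A_k B_l + B_k = A_l B_k + B_l$, and the solution of the resulting functional equation split into the cases $\gamma \neq 0$ and $\gamma = 0$. Two remarks. First, you are right to insist on nondegeneracy of $G$: the paper's statement omits it, but without it the conclusion fails (degenerate limits are attainable and are not of the form $G_\gamma$); likewise the conclusion should properly read that $G$ is of the same type as $G_\gamma$, a point you handle correctly by absorbing the affine freedom into $a_n$, $b_n$. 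Second, the step you flag as the main obstacle is exactly where the real work lies: integer multiplicativity $A_{kl} = A_k A_l$ alone does not force $A_k = k^{-\gamma}$ (one can prescribe values arbitrarily on primes), so one must first extend the stability relation to a real parameter $s$ --- e.g. by applying convergence to types to $F^{\lfloor ns \rfloor}$ under both normalizations --- and only then invoke measurability or monotonicity of $s \mapsto A(s)$ to solve the Hamel equation. Since you name this gap and the tools needed to close it rather than eliding it, the sketch is an honest and essentially complete account of the classical proof.
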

\noindent
The following definitions are now natural.
\begin{definition}
Let $F$ be a cumulative distribution function such that there are sequences of real numbers $a_n > 0$, $b_n$ satisfying
\begin{equation*}
F^n \left( a_n x + b_n \right) \to G(x) = G_\gamma (x),
\end{equation*}
as $n \to \infty$. Then $F$ is said to be in the domain of attraction of $G_\gamma$, denoted $F \in \mathcal{D}\left( G_\gamma \right)$.
\end{definition}

\begin{definition}
The extreme value index of a distribution function $F$ is $\gamma$ if and only if $F \in \mathcal{D}\left(  G_\gamma \right)$.
\end{definition}

\bigskip

There are several statistical estimators for $\gamma$ which are valid in different cases and have different desirable properties. Among the earliest ones is the Hill estimator proposed in \citet{Hill}. It is valid for $\gamma > 0$ and among its merits is its straightforward applicability. Since it was proposed, the Hill estimator has indeed become a widely used tool.

Consider a sample $s_1, s_2, \dots, s_n \in \mathbb{R}$. Denote the order statistics by $s_{(1,n)} \geq s_{(2,n)} \geq \cdots \geq s_{(n,n)}$. Let $1 \leq k < n$. The expression for the Hill estimator is
\begin{equation*}
\hat{H}_{k,n} = \frac{1}{k} \sum \limits_{i=1}^k \log \left( \frac{s_{(i,n)}}{s_{(k+1,n)}} \right).
\end{equation*}
The restriction $\gamma > 0$ is clear since each of the logarithms takes a positive value.

\bigskip

There are characterizations for the distributions in each $\mathcal{D} \left( G_\gamma \right)$. For $\gamma >0$ the characteristic property is a regularly varying tail. The general definition of this property is as follows.
\begin{definition}
Let $f: \mathbb{R}_+ \to \mathbb{R}$ be an eventually positive function. If for some $\alpha \in \mathbb{R}$
\begin{equation*}
\lim \limits_{t\to \infty}\frac{f(tx)}{f(t)} = x^{\alpha}
\end{equation*}
holds for all $x >0$, $f$ is said to be regularly varying, denoted $f \in RV_\alpha$.
\end{definition}
\noindent
The number $\alpha$ is called the index of regular variation of $f$. If $\alpha = 0$, $f$ is said to be slowly varying. Regularly varying functions can be considered as a generalization of functions of the form $f(x) = Cx^{\alpha}$, $C \in \mathbb{R},$ to the case where the constant $C$ is replaced with a slowly varying function.

The distributions in $\mathcal{D}\left( G_\gamma \right)$ for $\gamma >0$ are precisely those whose tails are regularly varying.
\begin{theorem}\label{thm::RVandDomain}
Let $F$ be a distribution function. The condition $F \in \mathcal{D} (G_\gamma)$ for $\gamma >0$ holds if and only if
\begin{equation*}
\lim \limits_{t \to \infty} \frac{1-F(tx)}{1-F(t)} = x^{- 1/\gamma}.
\end{equation*}
\end{theorem}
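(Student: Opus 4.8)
The plan is to prove the two implications separately, after first reducing the statement about maxima to an equivalent statement about the tail $\overline{F} := 1 - F$. Writing the defining convergence $F^{n}(a_{n}x + b_{n}) \to G_{\gamma}(x)$ in terms of logarithms and using $-\log(1-u) = u + o(u)$ as $u \to 0$, one sees that at every $x$ with $0 < G_{\gamma}(x) < 1$ the convergence is equivalent to
\begin{equation*}
n\,\overline{F}(a_{n}x + b_{n}) \longrightarrow (1 + \gamma x)^{-1/\gamma}, \qquad 1 + \gamma x > 0 .
\end{equation*}
This is the form I would work with throughout.

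For the implication $(\Leftarrow)$ --- regular variation implies domain of attraction --- I would let $U$ be the generalized inverse of $1/\overline{F}$ and set $a_{n} = U(n)$, $b_{n} = 0$. Then $n\,\overline{F}(a_{n}) \to 1$, so for $x > 0$
\begin{equation*}
n\,\overline{F}(a_{n}x) = \frac{\overline{F}(a_{n}x)}{\overline{F}(a_{n})}\, n\,\overline{F}(a_{n}) \longrightarrow x^{-1/\gamma}
\end{equation*}
directly from the assumed limit $\overline{F}(tx)/\overline{F}(t) \to x^{-1/\gamma}$, with $a_{n} \to \infty$ playing the role of $t$. Hence $F^{n}(a_{n}x) \to \exp(-x^{-1/\gamma})$; substituting $x \mapsto 1 + \gamma x$ and renaming the norming constants as $\tilde a_{n} = \gamma a_{n}$, $\tilde b_{n} = a_{n}$ converts this into $F^{n}(\tilde a_{n}x + \tilde b_{n}) \to G_{\gamma}(x)$, so $F \in \mathcal{D}(G_{\gamma})$.

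The implication $(\Rightarrow)$ is the substantial one. Here I would pass to the tail quantile function $U = (1/\overline{F})^{\leftarrow}$ and aim to show $U \in RV_{\gamma}$; the conclusion then follows from the inversion principle for regularly varying functions, namely that $U \in RV_{\gamma}$ with $\gamma > 0$ is equivalent to $1/\overline{F} \in RV_{1/\gamma}$, i.e. to $\overline{F} \in RV_{-1/\gamma}$. To obtain $U \in RV_{\gamma}$ from the domain-of-attraction hypothesis, I would first note that since $(1+\gamma x)^{-1/\gamma}$ stays positive for all admissible $x$ the right endpoint of $F$ is $+\infty$ and $a_{n} \to \infty$, and then translate the convergence $n\,\overline{F}(a_{n}x + b_{n}) \to (1+\gamma x)^{-1/\gamma}$ into the quantile relation
\begin{equation*}
\lim_{t \to \infty} \frac{U(tx) - U(t)}{a(t)} = \frac{x^{\gamma} - 1}{\gamma},
\end{equation*}
from which $U(t)/a(t) \to 1/\gamma$ and hence $U(tx)/U(t) \to x^{\gamma}$.

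The main obstacle is exactly this last translation. The hypothesis supplies convergence only along the integer sequence $n$ and carries the unknown location shift $b_{n}$; to recover a statement valid for all real $t \to \infty$ I expect to need (i) a monotonicity and interpolation argument bridging the discrete index $n$ and the continuous argument $t$, (ii) an argument that for $\gamma > 0$ the shift $b_{n}$ is asymptotically negligible relative to $a_{n}$, so that it does not disturb the tail, and (iii) the uniform convergence theorem for regularly varying functions to upgrade pointwise limits to the locally uniform control needed when inverting. Once $U \in RV_{\gamma}$ is established, the inversion principle closes the argument cleanly.
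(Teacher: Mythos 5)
The first thing to note is that the paper does not prove this theorem at all: it is quoted as classical background (Gnedenko's characterization of the Fr\'echet domain of attraction), so your proposal has to be judged on its own merits rather than against an internal argument. Your $(\Leftarrow)$ direction is essentially complete and correct: the reduction of $F^n(a_nx+b_n)\to G_\gamma(x)$ to $n\left(1-F(a_nx+b_n)\right)\to(1+\gamma x)^{-1/\gamma}$, the choice $a_n=U(n)$, $b_n=0$, and the affine reparametrization of $\exp(-x^{-1/\gamma})$ into $G_\gamma$ are all sound; the only omission is the verification that $n\left(1-F(U(n))\right)\to 1$, which genuinely uses the regular variation hypothesis together with right-continuity, but is standard.

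The $(\Rightarrow)$ direction, however, contains a genuine gap, and you have marked its location yourself. Essentially all of the content of this implication lies in passing from convergence along the integers, with unknown constants $a_n$, $b_n$, to the continuous-parameter relation $(U(tx)-U(t))/a(t)\to(x^\gamma-1)/\gamma$; you reduce the theorem to this step and then list ingredients (i)--(iii) without carrying any of them out, so what you have is the skeleton of the de Haan--Ferreira proof rather than a proof. Worse, ingredient (ii) is false as stated: for $\gamma>0$, any admissible constants necessarily satisfy $b_n/a_n\to 1/\gamma$, not $b_n/a_n\to 0$ --- compare, via the convergence-to-types theorem, with the constants $\tilde a_n=\gamma U(n)$, $\tilde b_n=U(n)$ produced by your own $(\Leftarrow)$ construction. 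So a literal attempt to prove that $b_n$ is negligible relative to $a_n$ must fail; the correct statement is that the shift in the \emph{Fr\'echet} parametrization is negligible, i.e. $b_n-a_n/\gamma=o(a_n)$, equivalently that one may take zero shift once the limit is rewritten as $\exp(-x^{-1/\gamma})$. Note also that once that reduction is made, there is a shorter, more elementary finish than your quantile/inversion route: from $n\left(1-F(a_nx)\right)\to x^{-1/\gamma}$ and monotonicity of $1-F$, any $s\in[a_n,a_{n+1}]$ satisfies, for $x\geq 1$,
\begin{equation*}
\frac{(n+1)\left(1-F(a_{n+1}x)\right)}{n\left(1-F(a_n)\right)}\cdot\frac{n}{n+1}\;\leq\;\frac{1-F(sx)}{1-F(s)}\;\leq\;\frac{n\left(1-F(a_nx)\right)}{(n+1)\left(1-F(a_{n+1})\right)}\cdot\frac{n+1}{n},
\end{equation*}
and both bounds converge to $x^{-1/\gamma}$ (the case $x<1$ follows by taking reciprocals). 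This yields the continuous-variable limit directly, with no uniform convergence theorem, no inversion principle, and no appeal to $U(t)/a(t)\to 1/\gamma$ --- which, by the way, is itself a lemma requiring a telescoping argument, not the immediate consequence your sketch suggests.
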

\noindent
In other words: $F \in \mathcal{D} \left(G_\gamma \right)$ for $\gamma > 0$ if and only if $1-F \in RV_{-\alpha}$, where $\alpha = 1 / \gamma$. In this context $\alpha = 1 / \gamma$ is called the tail index of the distribution.

\bigskip

Asymptotic properties of the Hill estimator have been a subject to much interest. Consider the case where $F$ is a Pareto distribution i.e. $F(x) =1- C x^{-\alpha}$, $x \geq x_m(C) >0$, for some $\alpha>0$. Now the self-similarity properties of the distribution suggest that, intuitively, $\hat{H}_{k,n} \to \gamma$ whenever $k=k(n)$ grows large as $n \to \infty$. This indeed turns out to be the case and the result even holds more generally: When $C$ is replaced by a slowly varying function, i.e. when $1-F$ is regularly varying.

\begin{theorem}\label{thm::HillConsistency}
Assume that $F \in \mathcal{D}(G_\gamma)$ for $\gamma > 0$. Now, as $k_n \to \infty,$ $n \to \infty,$ $k_n /n \to 0$,
\begin{equation*}
\hat{H}_{k_n,n} \to_P \gamma.
\end{equation*}
\end{theorem}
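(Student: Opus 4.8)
The plan is to reduce the problem to a statement about i.i.d.\ standard Pareto random variables via the probability integral transform, and then to separate the genuinely power-law part of the tail from its slowly varying perturbation.

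First I would introduce the tail quantile function $U(t) = F^{\leftarrow}(1 - 1/t)$. By Theorem~\ref{thm::RVandDomain}, $F \in \mathcal{D}(G_\gamma)$ with $\gamma > 0$ is equivalent to $1 - F \in RV_{-1/\gamma}$, and by the theory of inverses of regularly varying functions this is in turn equivalent to $U \in RV_{\gamma}$. Writing $U(t) = t^\gamma \ell(t)$ with $\ell$ slowly varying, I would use the fact that if $Y_1, \dots, Y_n$ are i.i.d.\ standard Pareto (so $\log Y_i$ is standard exponential), then $\left(U(Y_1), \dots, U(Y_n)\right)$ has the same law as $(s_1, \dots, s_n)$, and since $U$ is nondecreasing the order statistics match: $s_{(i,n)} \stackrel{d}{=} U(Y_{(i,n)})$ jointly. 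Hence
\begin{equation*}
\hat{H}_{k_n,n} \stackrel{d}{=} \frac{1}{k_n}\sum_{i=1}^{k_n} \log\frac{U(Y_{(i,n)})}{U(Y_{(k_n+1,n)})} = \frac{\gamma}{k_n}\sum_{i=1}^{k_n}\log\frac{Y_{(i,n)}}{Y_{(k_n+1,n)}} + \frac{1}{k_n}\sum_{i=1}^{k_n}\log\frac{\ell(Y_{(i,n)})}{\ell(Y_{(k_n+1,n)})}.
\end{equation*}

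For the first (power-law) term I would invoke the memorylessness of the exponential distribution: the differences $\log Y_{(i,n)} - \log Y_{(k_n+1,n)}$ for $1 \le i \le k_n$ are distributed as the order statistics of $k_n$ i.i.d.\ standard exponential variables and are independent of $Y_{(k_n+1,n)}$. Consequently $\frac{1}{k_n}\sum_{i=1}^{k_n}\log(Y_{(i,n)}/Y_{(k_n+1,n)})$ equals in law the average of $k_n$ i.i.d.\ standard exponentials, which converges in probability to $1$ by the weak law of large numbers (using $k_n \to \infty$); multiplying by $\gamma$ gives the desired limit. I would also record here that $Y_{(k_n+1,n)} \to_P \infty$: since $k_n/n \to 0$, for any fixed $M$ the number of sample points exceeding $M$ is binomial with mean $n/M \gg k_n$, so $P(Y_{(k_n+1,n)} \le M) \to 0$.

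The remaining term, involving the slowly varying $\ell$, is where the real work lies, and I expect it to be the main obstacle. I would show it tends to $0$ in probability using Potter's bounds for slowly varying functions: for every $\epsilon > 0$ there is a threshold $t_0$ such that $\left|\log\frac{\ell(tx)}{\ell(t)}\right| \le c(\epsilon) + \epsilon\log x$ for all $t \ge t_0$ and $x \ge 1$, where $c(\epsilon) \to 0$ as $\epsilon \to 0$. On the event $\{Y_{(k_n+1,n)} \ge t_0\}$, whose probability tends to $1$ by the previous paragraph, applying this with $t = Y_{(k_n+1,n)}$ and $x = Y_{(i,n)}/Y_{(k_n+1,n)} \ge 1$ bounds the remainder in absolute value by $c(\epsilon) + \epsilon \cdot \frac{1}{k_n}\sum_{i=1}^{k_n}\log(Y_{(i,n)}/Y_{(k_n+1,n)})$. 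Since that average converges in probability to $1$, the remainder is asymptotically at most $c(\epsilon) + \epsilon$ in probability; letting $\epsilon \to 0$ forces it to $0$. The delicate point throughout is the uniformity of the Potter bound over the full range of the spacings $Y_{(i,n)}/Y_{(k_n+1,n)}$ --- including the largest order statistics, where this ratio can be large --- which is precisely what lets us sum over $i$ without losing control.
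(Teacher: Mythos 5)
Your proposal is correct, but there is no proof in the paper to compare it against: the paper states Theorem~\ref{thm::HillConsistency} without proof and simply attributes it to \citet{Mason}. What you have written is the classical self-contained argument for weak consistency of the Hill estimator, essentially the textbook proof (cf.\ de Haan and Ferreira, \emph{Extreme Value Theory: An Introduction}, Theorem 3.2.2): transfer to standard Pareto variables via $s_i \stackrel{d}{=} U(Y_i)$, split $\log U$ into $\gamma \log(\cdot) + \log \ell$, dispose of the pure power-law part with the R\'enyi representation and the weak law of large numbers, and control the slowly varying part with Potter's bounds on the event $\{Y_{(k_n+1,n)} \ge t_0(\epsilon)\}$. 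The delicate points are handled correctly: the Potter bound requires both arguments to exceed the threshold, which holds because $Y_{(i,n)} \ge Y_{(k_n+1,n)} \ge t_0$ for $i \le k_n$; the order of limits (fix $\epsilon$, let $n \to \infty$, then let $\epsilon \to 0$) is the right one, since $t_0$ depends on $\epsilon$; and $Y_{(k_n+1,n)} \to_P \infty$ follows from $k_n/n \to 0$ exactly as you argue. One small step worth making explicit: your identity is an equality in distribution for each $n$, and you conclude convergence in probability of $\hat{H}_{k_n,n}$ from convergence in probability of the Pareto-side expression; this transfer is legitimate precisely because the limit $\gamma$ is a constant, so convergence in probability coincides with convergence in distribution, which depends only on the marginal laws. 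The remaining ingredients you invoke --- the equivalence $1-F \in RV_{-1/\gamma} \iff U \in RV_{\gamma}$, the R\'enyi representation, and Potter's theorem in the additive form $\left| \log \left( \ell(tx)/\ell(t) \right) \right| \le c(\epsilon) + \epsilon \log x$ with $c(\epsilon) \to 0$ --- are standard and correctly stated, so the proof is complete modulo these cited facts.
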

\noindent
This result was obtained in \citet{Mason}.

The natural question after settling the consistency of $\hat{H}_{k,n}$ is its limiting distribution. This requires some additional assumptions about the underlying distribution. The following class of functions that is appropriate for this purpose is discussed in \citet{2erv}.

\begin{definition}\label{def::2ERV}
A measurable function $f: \mathbb{R}_+ \to \mathbb{R}$ is said to be of second order extended regular variation  if \begin{equation*}
\lim \limits_{t \to \infty} \frac{\frac{f(tx)-f(t)}{a(t)} -\frac{x^{\gamma}-1}{\gamma}}{A \left( t \right) } = H_{\gamma, \rho}(x) = c_1 \int_1^x s^{\gamma-1} \int_1^s u^{\rho-1 } \mathrm{d}u\mathrm{d}s + c_2\int_1^x s^{\gamma+\rho-1} \mathrm{d}s
\end{equation*}
for some $c_1,c_2 \in \mathbb{R}$, $\gamma \in \mathbb{R}$ and $\rho \leq 0$, where $H$ is not a multiple of $\frac{x^{\gamma}-1}{\gamma}$ and the positive or negative function $A$ converges to zero as $t \to \infty$. The function $a$ is some positive auxiliary function. We denote this by $f \in 2ERV_{\gamma, \rho}$.
\end{definition}

\noindent
Let $F$ be a distribution function. Consider the following function
\begin{equation}\label{eq::U}
U(y) = \inf \left\{ x \in \mathbb{R} \: \middle| \frac{1}{1-F(x)} \leq y  \: \right\}.
\end{equation}
It can be shown that $U \in 2ERV_{\gamma, \rho}$ for $\gamma >0$ implies $F \in \mathcal{D}( G_\gamma)$. The asymptotic behavior of $\hat{H}_{k,n}$ is also neatly expressed in terms of $U$.
\begin{theorem}\label{thm::univariateHillDistribution}
Let $F$ be a distribution function such that the related $U$, given by \eqref{eq::U}, is in $2ERV_{\gamma, \rho}$ for some $\gamma >0$. Let $A$ be the auxiliary function of $U$ in Definition \ref{def::2ERV}. Now, as $k_n \to \infty,$ $n \to \infty,$ $k_n /n \to 0$,
\begin{equation*}
\sqrt{k_n} \left( \hat{H}_{k_n,n} - \gamma \right) \to \mathcal{N} \left( \frac{\lambda}{1-\rho}, \gamma^2 \right),
\end{equation*}
where
\begin{equation*}
\lambda = \lim \limits_{n \to \infty} \sqrt{k_n} A\left( \frac{n}{k_n} \right).
\end{equation*}
\end{theorem}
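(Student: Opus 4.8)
The plan is to reduce the statement to the order statistics of standard Pareto variables, where an exponential spacing structure yields the central limit behaviour. First I would invoke the quantile representation $s_{(i,n)} \stackrel{d}{=} U\bigl(Y_{(n-i+1,n)}\bigr)$, where $U$ is the tail quantile function of \eqref{eq::U}, the $Y_1,\dots,Y_n$ are i.i.d. standard Pareto (so that each $\log Y_j$ is standard exponential), and $Y_{(1,n)} \leq \cdots \leq Y_{(n,n)}$ are their increasing order statistics. Writing $t = Y_{(n-k_n,n)}$ and $V_i = Y_{(n-i+1,n)}/t \geq 1$, the estimator takes the form
\begin{equation*}
\hat{H}_{k_n,n} \stackrel{d}{=} \frac{1}{k_n}\sum_{i=1}^{k_n}\Bigl( \log U(t V_i) - \log U(t) \Bigr).
\end{equation*}
Since $Y_{(n-k_n,n)}$ is an intermediate order statistic, I would then use $t/(n/k_n) \to_P 1$ to justify replacing the random argument $t$ by the deterministic $n/k_n$ inside the auxiliary function $A$, up to negligible terms.

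The second step is a second-order expansion. For $\gamma > 0$ the condition $U \in 2ERV_{\gamma,\rho}$ transfers to $\log U$, giving, uniformly for $x$ in suitable ranges,
\begin{equation*}
\log U(tx) - \log U(t) = \gamma \log x + A(t)\,\Psi_\rho(x) + o\bigl(A(t)\bigr), \qquad \Psi_\rho(x) = \int_1^x s^{\rho - 1}\,\mathrm{d}s.
\end{equation*}
Substituting this into the sum splits $\hat{H}_{k_n,n} - \gamma$ into a \emph{main stochastic term} $\gamma\bigl( k_n^{-1}\sum_i \log V_i - 1 \bigr)$, a \emph{bias term} $A(n/k_n)\, k_n^{-1}\sum_i \Psi_\rho(V_i)$, and a remainder.

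For the main term I would appeal to the Rényi representation: the top log-spacings $\{\log Y_{(n-i+1,n)} - \log Y_{(n-k_n,n)}\}_{i=1}^{k_n}$ are distributed as the order statistics of $k_n$ i.i.d. standard exponentials, so $k_n^{-1}\sum_i \log V_i$ is an average of $k_n$ i.i.d. mean-one variance-one summands. The classical central limit theorem then gives $\sqrt{k_n}\,\gamma\bigl( k_n^{-1}\sum_i \log V_i - 1 \bigr) \to \mathcal{N}(0,\gamma^2)$, which supplies the asymptotic variance. For the bias term, $\sqrt{k_n}\,A(n/k_n) \to \lambda$ by hypothesis, while the limiting empirical measure of the $V_i$ is standard Pareto, so that $k_n^{-1}\sum_i \Psi_\rho(V_i) \to \int_1^\infty \Psi_\rho(y)\,y^{-2}\,\mathrm{d}y = 1/(1-\rho)$; together these yield the asymptotic mean $\lambda/(1-\rho)$. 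Combining the two contributions via Slutsky's theorem produces the claimed limit law.

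The step I expect to be the main obstacle is controlling the remainder after multiplication by $\sqrt{k_n}$. The pointwise $o(A(t))$ above is insufficient, because the ratios $V_i$ range over a random, unbounded set, so the error must be dominated uniformly. I would establish this with a uniform inequality of Potter--Drees type for $2ERV$ functions, bounding the normalized error by $\epsilon\,\max\bigl(x^{\rho + \delta}, x^{\rho - \delta}\bigr)$ whenever $t$ and $tx$ are large, and then combine it with the boundedness of $\sqrt{k_n}\,A(n/k_n)$ and moment estimates for the Pareto order statistics to force the scaled remainder to $o_P(1)$. The same uniform control is what rigorously licenses the earlier replacement of the random argument $t = Y_{(n-k_n,n)}$ by $n/k_n$.
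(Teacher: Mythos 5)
The paper contains no proof of this theorem for you to be compared against: Theorem \ref{thm::univariateHillDistribution} sits in the review section (Section \ref{sec::positive}) and is quoted as a known result, with the surrounding text pointing to \citet{haeusler} and \citet{deHaanHillNormal} for its provenance; it is the classical asymptotic normality of the Hill estimator (Theorem 3.2.5 in de Haan and Ferreira's book). Judged on its own merits, your outline reconstructs that standard proof correctly: the quantile representation $s_{(i,n)} \stackrel{d}{=} U\bigl(Y_{(n-i+1,n)}\bigr)$, R\'enyi's representation making the $\log V_i$ exactly i.i.d.\ standard exponentials independent of the threshold $Y_{(n-k_n,n)}$, the CLT for the main term yielding the variance $\gamma^2$, and the law of large numbers for the bias term, where your computation $\int_1^\infty \Psi_\rho(y)\, y^{-2}\,\mathrm{d}y = 1/(1-\rho)$ combined with $\sqrt{k_n}\,A(n/k_n) \to \lambda$ gives the asymptotic mean $\lambda/(1-\rho)$. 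You also put your finger on the genuine crux --- the pointwise second-order limit must be upgraded to a uniform Drees/Potter-type inequality to kill the remainder after multiplication by $\sqrt{k_n}$ and to justify replacing the random threshold by $n/k_n$ inside $A$ --- which is precisely how the textbook proof handles it. Two details to tighten if you write this out in full: (i) the theorem here is stated for the general $2ERV_{\gamma,\rho}$ condition of Definition \ref{def::2ERV} (with auxiliary functions $a$ and $A$), whereas your expansion of $\log U$ presupposes the ratio form $\bigl(U(tx)/U(t) - x^\gamma\bigr)/A(t) \to x^\gamma (x^\rho-1)/\rho$; for $\gamma > 0$ passing between the two is a known but nontrivial step (it uses $a(t)/U(t) \to \gamma$ and may renormalize $A$), so it deserves an explicit citation or lemma. (ii) The averages $k_n^{-1}\sum_i \Psi_\rho(V_i)$ form a triangular array, so invoke a weak LLN for triangular arrays (or note that $\Psi_\rho(Y)$ has finite variance for $\rho \le 0$, so Chebyshev suffices) rather than the i.i.d.\ LLN verbatim.
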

\noindent
It is worth a remark that if
\begin{equation*}
\lim \limits_{t\to \infty} t^\alpha \left( \frac{U(tx)}{U(t)} - x^\gamma \right)=0
\end{equation*}
for all $\alpha >0$, the result above holds with $\lambda=0$, i.e. the estimator is then asymptotically unbiased.

The general conditions for the tail of the distribution under which the sequence $k_n$ can be selected so that the Hill estimator is asymptotically normal were derived in  \citet{haeusler}. The roles of different smoothness conditions for the asymptotic normality of the Hill estimator were further clarified in \citet{deHaanHillNormal}.

Together these results provide a satisfying picture of the basic asymptotic properties of the Hill estimator. They were also the foundation for the asymptotic properties of the separating Hill estimator derived in \citet{Ilmonen}. Other aspects of the asymptotic behavior that have been examined include asymptotic bias of the estimator (see e.g. \cite{haanPeng} and \citet{haeusler}), optimal selection of the sequence $k_n$ (see \citet{hall2}, \citet{gomes} and \citet{danielsson}) and bias correction (see \citet{gomes2} and references therein).

\section{Separating Hill estimator under known location and scatter}\label{sec::known}

In the univariate context, extreme values are observations that are exceptionally large or small. In the multivariate setting, the situation is less straightforward due to the lack of a canonical order relation. A possible approach explored in e.g. \citet{Sibuya}, \citet{Tiago} and \citet{deHaanResnickMulti} is to consider the componentwise maxima of the sample. However, it is questionable whether this approach gives a good definition of multivariate extremes. It is not invariant under affine transformations. In fact, a simple rotation may alter the data points that are considered as extreme observations. Under the assumption of multivariate ellipticity, the symmetry properties allow for a different approach.

A random variable is said to be elliptically distributed if
\begin{equation}\label{eq::elliptical}
X \sim \mu + \mathcal{R} \Lambda U,
\end{equation}
where the location vector $\mu \in \mathbb{R}^d$, the random vector $U$ is uniformly distributed on the unit sphere $\mathbb{S}^{d-1} \subset \mathbb{R}^d$, the matrix $\Lambda \in \mathbb{R}^{d \times d}$ is such that the scatter matrix $\Sigma = \Lambda \Lambda^T \in \mathbb{R}^{d \times d}$ is a full rank positive definite matrix, and $\mathcal{R}$ is a real valued random variable. The random variable $\mathcal{R}$ is called the generating variate of the distribution.

The family of elliptical distributions was introduced in \citet{kelker}. (See also \citet{fang}.)

The symmetry properties of elliptical distributions motivate considering the Mahalanobis distance introduced in \citet{mahalanobis}. (See also \citet{mahalanobis2}.)
\begin{definition}
Let $A \in \mathbb{R}^{d\times d}$ be a full rank symmetric positive definite matrix. The metric $d_A$ given by
\begin{equation*}
d_A(x,y) = \left( \left\langle \, x-y \, \middle| \, A^{-1} \left( x-y\right)\, \right\rangle \right)^{\frac{1}{2}}
\end{equation*}
is called the Mahalanobis distance relative to $A$.
\end{definition}

\noindent
The crucial observation here is the following: If $X$ follows an elliptical distribution with the location $\mu$ and scatter $\Sigma,$ the Mahalanobis distance $d_\Sigma(X, \mu)$ is equal in distribution to $\mathcal{R}$ as
\begin{eqnarray*}
d_\Sigma(X, \mu)^2 &\sim & \left\langle \, \mu+ \mathcal{R}  \Lambda U -\mu \, \middle| \Sigma^{-1} \left(\mu + \mathcal{R}  \Lambda U -\mu \right) \, \, \right\rangle  \\ &\sim & \mathcal{R}^2 \left\langle \,    U \, \middle| \Lambda^T \left( \Lambda \Lambda^T \right)^{-1}  \Lambda U \, \, \right\rangle  \\ &\sim& \mathcal{R}^2   \left\| U \right\|^2 \sim \mathcal{R}^2.
\end{eqnarray*}
Despite its simplicity, this observation rigorously describes the intimate relationship between $X$ and $\mathcal{R}$. It leads us to consider extreme observations under ellipticity to be the ones that correspond to extreme values of $\mathcal{R}$ in the univariate sense. Consequently, under ellipticity, we have both intuitively and formally sensible concept of an extreme value index.

\begin{definition}\label{def::ellipticalExtreme}
An elliptical distribution is said to have an extreme value index $\gamma >0$ if and only if the extreme value index of its generating variate is $\gamma >0$.
\end{definition}

The definition above is further supported by a result from \citet{ellipticalRV}: General approach to the multivariate regular variation is defining it in the following way.
The distribution of a $d$-dimensional random variable $X$ is said to be regularly varying with index $\alpha>0$ if there exists a random variable $\Theta$ with values on the unit sphere $\mathbb{S}^{d-1}$ a.s. such that, for all $x>0$, as $t \to \infty$,
\begin{equation*}
\frac{P \left( \left\| X \right\| \geq tx  , \, X / \left\| X \right\| \in \cdot \right)}{P \left( \left\| X \right\| \geq t  \right)} \to_v x^{-\alpha} P \left( \Theta \in \cdot \right),
\end{equation*}
where $\to_v$ denotes vague convergence. It was shown in \citet{ellipticalRV}, that the regular variation of an elliptical distribution (defined in the general sense) is equivalent to the regular variation of the tail of the corresponding generating variate. Recall now that by Theorem \ref{thm::RVandDomain}, a positive extreme value index is equivalent to having regularly varying tail.

\bigskip

An affine invariant multivariate extension of the Hill estimator was introduced in \citet{Ilmonen}: Consider a multivariate sample $S_n=\left\{X_1, \dots, X_n \right\} \subset \mathbb{R}^d$. Let $\mu \in \mathbb{R}^d$ and let $\Sigma \in \mathbb{R}^{d\times d}$ be a full rank positive definite matrix. Let $D_{(1,n)} \geq D_{(2,n)} \geq \cdots \geq D_{(n,n)}$ be the order statistics of the Mahalanobis distances $d_\Sigma(X_i,\mu)$. The separating Hill estimator under these parameters is given by
\begin{equation}\label{eq::sepHill}
\hat{H}_d \left( S_n, \mu, \Sigma, k,n \right) = \frac{1}{k} \sum \limits_{i=1}^k \log \left( \frac{D_{(i,n)}}{D_{(k+1,n)}} \right).
\end{equation}

Under ellipticity, as observed above, if the location and scatter of the underlying distribution are $\mu$ and $\Sigma$ respectively, the Mahalanobis distance $d_\Sigma(X,\mu)$ is equal in distribution to $\mathcal{R}$. In that case the asymptotic properties of $\hat{H}^s_d$ under known location and scatter are straightforward to derive. This was done in \citet{Ilmonen}. However, in practice, one has to
 estimate the location and scatter. The asymptotic theory under known location and scatter is thus highly insufficient for ensuring the practical applicability of the estimator.

In this paper we settle the matter by proving that replacing location and scatter by estimates does not affect the asymptotic behavior of the separating Hill estimator. This holds as long as the estimators for the location and scatter are $\sqrt{n}$-consistent.

\section{Separating Hill estimator under estimated location and scatter}\label{sec::estimated}

It is not trivial that the asymptotic properties of the separating Hill estimator  are not affected by replacing the true location and scatter in the expression
\eqref{eq::sepHill} with estimates.

Let $d \geq 2$. Throughout this section, including Subsections \ref{sec::Lemmas} and \ref{sec::main}, we assume that $X_1, X_2, \dots,$  $X_n: \Omega \to \mathbb{R}^d$, is a sequence of i.i.d. random variables with a $d$-dimensional elliptical distribution \eqref{eq::elliptical}. We assume that the distribution has a positive extreme value index $\gamma$. We denote the location parameter of the distribution by $\mu$, and the positive definite scatter parameter by $\Sigma$.
Notations $\hat{\mu}_n$ and $\hat{\Sigma}_n$ are used for estimators of the location and scatter of the distribution, respectively.

Throughout this section, again including Subsections \ref{sec::Lemmas} and \ref{sec::main}, we write
\begin{eqnarray*}
R_i &=& \left\langle \, X_i - \mu  \, \middle| \, \Sigma^{-1} \left( X_i - \mu \right) \, \right\rangle^{1/2} \quad \\ \quad E_i^{(n)}  &=& \left\langle \, X_i - \hat{\mu}_n  \, \middle| \, \hat{\Sigma}^{-1}_n \left( X_i - \hat{\mu}_n \right) \, \right\rangle^{1/2}.
\end{eqnarray*}
That is, $R_i$ is the true Mahalanobis distance of $X_i$ from the mean and $E_i^{(n)}$ is the $n$:th estimate of it. We denote their order statistics by $R_{(1,n)} \geq R_{(2,n)} \geq \dots \geq R_{(n,n)}$ and $E_{(1,n)} \geq E_{(2,n)} \geq \dots \geq E_{(n,n)}$,
and by $S_n$ we denote the set of the $n$ first observations $X_i$, $S_n = \left\{ X_1 ,\dots, X_n \right\}$. 

Below we will show that the difference
\begin{equation*}
\left| \hat{H}_d(S_n,\mu,\Sigma,k_n,n) - \hat{H}_d(S_n,\hat{\mu}_n,\hat{\Sigma}_n,k_n,n) \right|
\end{equation*}
becomes negligible if $k_n \to \infty,$ $n \to \infty,$ $k_n / n \to 0$ and if the estimates $\hat{\mu}_n$ and $\hat{\Sigma}_n$ behave well.

Our strategy is straightforward. The difference above consists of log-ratios
\begin{equation}\label{eq::logratio}
\log \left( \frac{E_{(i,n)}}{R_{(i,n)}} \right),
\end{equation}
where $ 1 \leq i \leq k_n+1$. We use an elementary argument to bound the absolute value of these expressions uniformly by a sequence that approaches zero in probability sufficiently quickly. The desired results are then obtained by showing that the obtained bound holds with a probability that approaches one.

\subsection{Controlling the log-ratios of the order statistics}\label{sec::Lemmas}
The asymptotical results for the separating Hill estimator under estimated location and scatter are obtained in two parts. We begin by proving the following lemmas that give a bound for the individual log-ratios \eqref{eq::logratio}. We will prove that the bound is valid for the pairs $\left( R_{(i,n)},E_{(i,n)} \right)$, where $1 \leq i \leq l$, if $l$ satisfies certain condition and $n$ is large enough.

\vspace{12pt}

\begin{lemma}\label{Lemma::epsilons}
Let $X_1, X_2, \dots$ be a sequence of i.i.d. random variables with a $d$-dimensional elliptical distribution \eqref{eq::elliptical}.  Let $R_{(i,n)}$ and $E_{(i,n)}$ be defined as at the beginning of Section \ref{sec::estimated}. For all $n > 0$ and $ 1 \leq i \leq n,$ define a random variable $\varepsilon_{(i,n)}$ by $\varepsilon_{(i,n)}= E_{(i,n)}^2 - R_{(i,n)}^2.$

Let $1 \leq l \leq n$. If $M_n < 1$ and $R_{(l,n)} > \frac{M_n}{2 \left( 1- M_n \right)},$ then
\begin{equation*}
\left| \varepsilon_{(l, n)} \right| \leq  \delta_n \left( R_{(l,n)}  \right),
\end{equation*}
where the polynomial
\begin{equation*}
\delta_n \left( x  \right) =  M_n x^2 + M_n x  + M_n,
\end{equation*}
and the coefficient $M_n$ in the above expression is given by
\begin{eqnarray}\label{eq::Mn}
M_n &=& \max \bigg\{ \lambda_{\max} \, A_n, \, \sqrt{\lambda_{\max}} \left(2 \left\| \mu \right\| A_n + B_n \right) , \\&& \, A_n \left\| \mu \right\|^2 + B_n \left\| \mu \right\| + C_n \, \bigg\}, \nonumber
\end{eqnarray}
where $\lambda_{\max}$ denotes the largest eigenvalue of $\Sigma$ and
\begin{eqnarray*}
A_n&=& \left\| \Sigma^{-1} - \hat{\Sigma}^{-1} \right\| \\
B_n &=& \left( \left\| \hat{\mu}_n \right\| + \left\| \mu \right\| \right)\left\| \Sigma^{-1} - \hat{\Sigma}^{-1}_n \right\| +    \left( \left\| \hat{\Sigma}^{-1}_n \right\| +  \left\| \Sigma^{-1} \right\| \right) \left\| \mu - \hat{\mu}_n \right\| \\
C_n &=& \left\| \mu \right\|^2 \left\| \Sigma^{-1} - \hat{\Sigma}^{-1}_n \right\| +  \left( \left\| \mu \right\|  + \left\| \hat{\mu}_n \right\| \right) \left\| \hat{ \Sigma}^{-1}_n \right\| \left\| \mu- \hat{\mu}_n \right\|.
\end{eqnarray*}
\end{lemma}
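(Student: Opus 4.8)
The plan is to first establish a \emph{pointwise} bound $|E_i^2 - R_i^2| \le \delta_n(R_i)$ for a single observation, and then lift it to the order statistics. Fixing an index $i$ and writing $P = \Sigma^{-1}$, $\hat P = \hat\Sigma_n^{-1}$ (both symmetric), I would expand the quadratic forms as
\[
E_i^2 - R_i^2 = \langle X_i \mid (\hat P - P) X_i\rangle - 2\langle X_i \mid \hat P\hat\mu_n - P\mu\rangle + \bigl(\langle \hat\mu_n \mid \hat P\hat\mu_n\rangle - \langle \mu \mid P\mu\rangle\bigr).
\]
The first term is controlled by $A_n\|X_i\|^2$. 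For the cross term I would use the symmetric splitting $\hat P\hat\mu_n - P\mu = \tfrac12(\hat P - P)(\hat\mu_n + \mu) + \tfrac12(\hat P + P)(\hat\mu_n - \mu)$, which gives $\|\hat P\hat\mu_n - P\mu\| \le \tfrac12 B_n$ and hence a bound $\|X_i\|\,B_n$ for the whole term. The last term, via the identity $\langle a\mid Ma\rangle - \langle b\mid Mb\rangle = \langle a-b\mid M(a+b)\rangle$ for symmetric $M$ together with one further split of the matrix, is bounded by $C_n$.

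Next I would convert norms to distances: since $R_i^2 \ge \lambda_{\max}^{-1}\|X_i - \mu\|^2$, one has $\|X_i\| \le \sqrt{\lambda_{\max}}\,R_i + \|\mu\|$. Substituting this into the three bounds and collecting powers of $R_i$ reproduces \emph{exactly} the three expressions appearing in the definition \eqref{eq::Mn} of $M_n$; replacing each coefficient by the maximum $M_n$ then yields the pointwise estimate $|E_i^2 - R_i^2| \le M_n R_i^2 + M_n R_i + M_n = \delta_n(R_i)$, valid for every $i$ with no extra hypothesis. This part is essentially bookkeeping.

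The order-statistic step comes next. Setting $g(x) = x^2 + \delta_n(x)$ and $h(x) = x^2 - \delta_n(x)$, the pointwise bound reads $h(R_i) \le E_i^2 \le g(R_i)$ for all $i$. I would then invoke the elementary monotonicity of order statistics: if $a_i \le b_i$ for all $i$, then the $l$-th largest of $\{a_i\}$ is at most the $l$-th largest of $\{b_i\}$. Because $g$ is strictly increasing on $[0,\infty)$, the $l$-th largest of $\{g(R_i)\}$ equals $g(R_{(l,n)})$, so $E_{(l,n)}^2 \le g(R_{(l,n)})$, which is the upper half $\varepsilon_{(l,n)} \le \delta_n(R_{(l,n)})$.

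The main obstacle is the lower bound, and it is precisely where the two hypotheses enter. Since the order statistics of $\{E_i\}$ and $\{R_i\}$ need not come from the same observations, I cannot simply invert $h$ — and $h$ fails to be monotone near the origin: its derivative $h'(x) = 2(1-M_n)x - M_n$ is positive only for $x > M_n/(2(1-M_n))$, which is meaningful only when $M_n < 1$ so that the leading coefficient $1-M_n$ is positive. Under the hypothesis $R_{(l,n)} > M_n/(2(1-M_n))$, the $l$ largest distances $R_{(1,n)}, \dots, R_{(l,n)}$ all lie in the region where $h$ increases, whence $h(R_{(j,n)}) \ge h(R_{(l,n)})$ for $j \le l$. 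Consequently at least $l$ of the values $E_i^2 \ge h(R_i)$ exceed $h(R_{(l,n)})$, forcing $E_{(l,n)}^2 \ge h(R_{(l,n)})$ and thus $\varepsilon_{(l,n)} \ge -\delta_n(R_{(l,n)})$. Combining the two halves gives $|\varepsilon_{(l,n)}| \le \delta_n(R_{(l,n)})$, as claimed.
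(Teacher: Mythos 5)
Your proposal is correct and follows essentially the same route as the paper: the same pointwise bound $|E_i^2-R_i^2|\le A_n\|X_i\|^2+B_n\|X_i\|+C_n$ (your symmetric splitting just makes explicit what the paper compresses into one Cauchy--Schwarz step), the same conversion $\|X_i\|\le\sqrt{\lambda_{\max}}\,R_i+\|\mu\|$ yielding $\delta_n(R_i)$, and the same order-statistic argument based on monotonicity of $x^2\pm\delta_n(x)$, with the hypotheses $M_n<1$ and $R_{(l,n)}>\frac{M_n}{2(1-M_n)}$ entering exactly where they do in the paper, namely to make $x^2-\delta_n(x)$ increasing on the relevant range.
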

\begin{proof}
The scatter matrix $\Sigma$ is positive definite. Thus its eigenvalues $\lambda_1, \dots, \lambda_d$ are positive and a properly normalized set of its eigenvectors $\left\{ e_1 , \dots, e_d \right\}$ form an orthonormal basis for $\mathbb{R}^d$. Write
\begin{equation*}
\lambda_{\max} = \max  \left\{  \lambda_1, \dots, \lambda_n \right\}.
\end{equation*}
The following inequality holds for all $x,y \in \mathbb{R}^d$:
\begin{equation}\label{eq::mahalanobisBound}
\left\| x-y \right\| \leq \sqrt{\lambda_{\max}} \, d_\Sigma (x,y)
\end{equation}
This can be seen by writing $x$ and $y$ in the basis $\left\{ e_1, \dots, e_d \right\}$
\begin{eqnarray*}
\left\|  x-y \right\|^2 &=& \sum \limits_{i=1}^d \left( x_i - y_i \right)^2 \\ &=&  \lambda_{\max} \, \sum \limits_{i=1}^d  \frac{1}{\lambda_{\max}}  \left( x_i - y_i \right)^2 \\
&\leq & \lambda_{\max} \, \sum \limits_{i=1}^d  \frac{1}{\lambda_{i}}  \left( x_i - y_i \right)^2 \left\langle \, e_i \, \middle|\,  e_i \, \right\rangle \\
&=& \lambda_{\max} \left\langle \, \sum \limits_{j=1}^d \left( x_j - y_j \right)e_j \, \middle|\, \sum \limits_{i=1}^d   \left( x_i - y_i \right) \Sigma^{-1} e_i \, \right\rangle \\
&=& \lambda_{\max} \, d_\Sigma(x,y)^2.
\end{eqnarray*}

By the Cauchy-Schwartz inequality and the bound $\left\| A x\right\| \leq \left\| A \right\| \left\| x \right\|$ for the operator norm, we have  that for all $1 \leq i \leq n$:
\begin{eqnarray*}
\left| R_i^2 - \left( E_i^{(n)} \right)^2  \right| &=& \bigg|\left\langle \, X_i - \mu  \, \middle| \, \Sigma^{-1} \left( X_i - \mu \right) \, \right\rangle \\&& - \left\langle \, X_i - \hat{\mu}_n  \, \middle| \, \hat{\Sigma}^{-1}_n \left( X_i - \hat{\mu}_n \right) \, \right\rangle\bigg| \\
&\leq& \left\| \Sigma^{-1} - \hat{\Sigma}^{-1}_n \right\| \left\| X_i \right\|^2 + \biggl( \left( \left\| \hat{\mu}_n \right\| + \left\| \mu \right\| \right) \left\| \Sigma^{-1} - \hat{\Sigma}^{-1}_n \right\| \\&&  +    \left( \left\| \hat{\Sigma}^{-1}_n \right\| +  \left\| \Sigma^{-1} \right\| \right) \left\| \mu - \hat{\mu}_n \right\| \biggr) \left\| X_i \right\| \\&& + \biggl( \left\| \mu \right\|^2 \left\| \Sigma^{-1} - \hat{\Sigma}^{-1}_n \right\| \\&& +  \left( \left\| \mu \right\|  + \left\| \hat{\mu}_n \right\| \right) \left\| \hat{ \Sigma}^{-1}_n \right\| \left\| \mu- \hat{\mu}_n \right\|  \biggr).
\end{eqnarray*}
Denote the coefficients of $\left\| X_i \right\|^2$ and $\left\| X_i \right\|$ by $A_n$ and $B_n$, respectively, and denote the expression inside the last large brackets by $C_n$. By the estimate $\left\| X \right\| \leq \left\| X - \mu \right\| + \left\| \mu \right\|$ the expression above has an upper bound
\begin{equation*}
A_n \left\| X_i - \mu \right\|^2 + \left( 2 \left\| \mu \right\| A_n + B_n \right) \left\| X_i - \mu \right\| + \left( A_n \left\| \mu \right\|^2 + B_n \left\| \mu \right\| + C_n \right).
\end{equation*}
We can now relate $\left\| X_i - \mu \right\|$ and $\left\| R_i\right\|$ using Equation \eqref{eq::mahalanobisBound} and obtain the following upper bound for the original expression:
\begin{equation*}
\lambda_{\max} \, A_n R_i^2 + \sqrt{ \lambda_{\max} } \, \left( 2 \left\| \mu \right\| A_n + B_n \right) R_i + \left( A_n \left\| \mu \right\|^2 + B_n \left\| \mu \right\| + C_n \right).
\end{equation*}
An upper bound for this is $\delta_n \left( R_i \right) =M_n R_i^2 + M_n R_i +M_n$, where $M_n$ is as in \eqref{eq::Mn}.

Consider
\begin{equation*}
\varepsilon_{(i,n)} = E^{2}_{(i,n)} - R_{(i,n)}^2.
\end{equation*}
We bound $\left| \varepsilon_{(i,n)} \right|$ by finding an upper bound and a lower bound for the difference above. We also find the conditions under which these bounds are valid.

The upper bound: Let $1 \leq l \leq n$. Since $M_n > 0,$ the function
\begin{equation*}
x^2 + \delta_n(x) = (1+M_n) x^2 + M_n x + M_n
\end{equation*}
is strictly increasing for $x > 0$. Thus, for all $R_{j} \leq R_{(l,n)},$ we have that
\begin{equation*}
\left( E_{j}^{(n)} \right)^2 \leq R_j^2 + \delta_n \left( R_j \right) \leq R_{(l,n)}^2 + \delta_n \left(  R_{(l,n)}\right).
\end{equation*}
It now follows that $R_{(l,n)}^2 + \delta_n \left(  R_{(l,n)}\right)$ is an upper bound for at least $n-l+1$ of the observations $\left(E_{j}^{(n)} \right)^2$, or
\begin{equation*}
E_{(l,n)}^2 \leq R_{(l,n)}^2 + \delta_n \left(  R_{(l,n)}\right).
\end{equation*}
Equivalently
\begin{equation*}
\varepsilon_{(i,n)} \leq \delta_n \left(  R_{(l,n)}\right).
\end{equation*}

The lower bound: By differentiating we see that the function
\begin{equation*}
x^2 - \delta_n \left( x \right) = (1-M_n) x^2 -M_n x - M_n
\end{equation*}
is strictly increasing if
\begin{equation*}
\frac{2 \left( 1-M_n \right)}{M_n} \, x > 1.
\end{equation*}
Thus, assuming that $M_n <1$ and $R_{(l,n)} > \frac{M_n}{2 \left( 1-M_n \right)},$ we have, for all $R_i \geq R_{(l,n)},$ that
\begin{equation*}
\left( E_{j}^{(n)} \right)^2 \geq R_j^2 - \delta_n \left( R_j \right) \geq R_{(l,n)}^2 - \delta_n \left(  R_{(l,n)}\right).
\end{equation*}
Thus, under the given conditions, $R_{(l,n)}^2 - \delta_n \left(  R_{(l,n)}\right)$ is a lower bound for at least $l$ of the observations $\left(E_{j}^{(n)} \right)^2$, or
\begin{equation*}
E_{(l,n)}^2 \geq R_{(l,n)}^2 - \delta_n \left(  R_{(l,n)}\right).
\end{equation*}
Equivalently
\begin{equation*}
\varepsilon_{(i,n)} \geq - \delta_n \left(  R_{(l,n)}\right).
\end{equation*}
\end{proof}

\vspace{12pt}

Next we obtain a uniform bound for the large log-ratios in terms of $M_n$.

\vspace{12pt}

\begin{lemma}\label{Lemma::differences}
Let $X_1, X_2, \dots$ be a sequence of i.i.d. random variables with a $d$-dimensional elliptical distribution \eqref{eq::elliptical}. Let $R_{(i,n)}$ and $E_{(i,n)}$ be defined as at the beginning of Section \ref{sec::estimated} and let $M_n$ be defined as in Lemma \ref{Lemma::epsilons}.
Let $1 \leq l \leq n$. Assume that $M_n < 1$ and $R_{(l ,n )} \geq \frac{M_n}{2\left(1 - M_n \right)}$. Denote
\begin{equation*}
a_n = M_n + \frac{M_n}{R_{(l,n)}} + \frac{M_n}{R^2_{(l,n)}}.
\end{equation*}
If $a_n < 1$, then the following holds for all $1 \leq i \leq l$:
\begin{equation*}
\left| \log \left( \frac{E_{(i,n)}}{E_{(l,n)}} \right) - \log \left( \frac{R_{(i,n)}}{R_{(l,n)}} \right) \right| \leq \log \left( \frac{1}{1-a_n} \right).
\end{equation*}
\end{lemma}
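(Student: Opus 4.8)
The plan is to reduce the difference of ratios to a two-sided bound on the single-index log-ratios $\log\left(E_{(i,n)}/R_{(i,n)}\right)$ and then combine two such bounds. The starting point is the elementary identity
\[
\log \left( \frac{E_{(i,n)}}{E_{(l,n)}} \right) - \log \left( \frac{R_{(i,n)}}{R_{(l,n)}} \right) = \log \left( \frac{E_{(i,n)}}{R_{(i,n)}} \right) - \log \left( \frac{E_{(l,n)}}{R_{(l,n)}} \right),
\]
so it suffices to confine each term $\log\left(E_{(j,n)}/R_{(j,n)}\right)$, for $j \in \{i, l\}$, to a common interval and then take the difference.

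First I would invoke Lemma \ref{Lemma::epsilons} not only at the index $l$ but at every index $i$ with $1 \leq i \leq l$. Since the order statistics satisfy $R_{(i,n)} \geq R_{(l,n)}$ for $i \leq l$, the hypothesis $R_{(i,n)} > \frac{M_n}{2(1-M_n)}$ propagates upward and holds for each such $i$; together with $M_n < 1$ this lets Lemma \ref{Lemma::epsilons} yield $\left| E_{(i,n)}^2 - R_{(i,n)}^2 \right| \leq \delta_n\!\left(R_{(i,n)}\right)$. Dividing by $R_{(i,n)}^2$ gives
\[
\left| \frac{E_{(i,n)}^2}{R_{(i,n)}^2} - 1 \right| \leq M_n + \frac{M_n}{R_{(i,n)}} + \frac{M_n}{R_{(i,n)}^2}.
\]
Because the right-hand side is decreasing in $R_{(i,n)}$ and $R_{(i,n)} \geq R_{(l,n)}$, it is in turn bounded by $a_n$, so $\left| E_{(i,n)}^2/R_{(i,n)}^2 - 1 \right| \leq a_n$ for every $i \leq l$.

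The assumption $a_n < 1$ now guarantees $1 - a_n > 0$, so taking square roots of $1 - a_n \leq E_{(i,n)}^2/R_{(i,n)}^2 \leq 1 + a_n$ and then logarithms places each single-index log-ratio in a fixed band,
\[
\tfrac{1}{2}\log(1 - a_n) \leq \log \left( \frac{E_{(i,n)}}{R_{(i,n)}} \right) \leq \tfrac{1}{2}\log(1 + a_n).
\]
The algebraic fact $(1+a_n)(1-a_n) = 1 - a_n^2 \leq 1$ shows $1 + a_n \leq 1/(1-a_n)$, so the upper endpoint is at most $\tfrac{1}{2}\log\frac{1}{1-a_n}$, while the lower endpoint equals $-\tfrac{1}{2}\log\frac{1}{1-a_n}$. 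Hence each $\log\left(E_{(i,n)}/R_{(i,n)}\right)$ lies in $[-\tfrac{1}{2}L,\ \tfrac{1}{2}L]$ with $L = \log\frac{1}{1-a_n}$, and the difference of any two such quantities is at most $L$ in absolute value; applying this to the indices $i$ and $l$ and using the identity from the first step gives the claim.

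I expect the only genuinely delicate points to be the extension of Lemma \ref{Lemma::epsilons} from the single index $l$ to all indices $i \leq l$ — where one must notice that monotonicity of the order statistics carries the hypothesis on $R_{(l,n)}$ upward and that $x \mapsto M_n + M_n/x + M_n/x^2$ is decreasing — together with the small but essential inequality $1 - a_n^2 \leq 1$, which converts the naive bound $\tfrac{1}{2}\log\frac{1+a_n}{1-a_n}$ into the cleaner stated bound $\log\frac{1}{1-a_n}$. Everything else is routine manipulation of logarithms.
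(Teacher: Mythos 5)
Your proof is correct and takes essentially the same approach as the paper's: both reduce the claim to two-sided control of the single-index ratios $E_{(i,n)}/R_{(i,n)}$ by applying Lemma \ref{Lemma::epsilons} at every $i \leq l$ (noting that the hypothesis on $R_{(l,n)}$ propagates upward through the order statistics), bound the relative error uniformly by $a_n$, and conclude via $\max\left\{ \left|\log\left(1 \pm a_n\right)\right| \right\} = \log\left(\frac{1}{1-a_n}\right)$. The only cosmetic differences are that the paper gets this last identity from the mean value theorem where you use $(1+a_n)(1-a_n) \leq 1$, and it uses the triangle inequality on the two log-terms where you use containment in the interval $\left[-\tfrac{1}{2}\log\frac{1}{1-a_n},\ \tfrac{1}{2}\log\frac{1}{1-a_n}\right]$.
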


\begin{proof}
By the triangle inequality
\begin{eqnarray*}
\left| \log \left( \frac{E_{(i,n)}}{E_{(l,n)}} \right) - \log \left( \frac{R_{(i,n)}}{R_{(l,n)}} \right) \right| &=& \frac{1}{2} \left| \log \left( \frac{E^2_{(i,n)}}{R^2_{(i,n)}} \right) + \log \left( \frac{R^2_{(l,n)}}{E^2_{(l,n)}} \right) \right| \\
&\leq& \frac{1}{2} \left| \log \left( \frac{E^2_{(i,n)}}{R^2_{(i,n)}} \right) \right| + \frac{1}{2} \left| \log \left( \frac{E^2_{(l,n)}}{R^2_{(l,n)}} \right) \right| \\ &=& \frac{1}{2} \left| \log \left( 1+ \frac{ \varepsilon_{(i,n)}}{R^2_{(i,n)}} \right) \right| \\&&+\frac{1}{2} \left| \log \left( 1+ \frac{ \varepsilon_{(l,n)}}{R^2_{(l,n)}} \right) \right|,
\end{eqnarray*}
where $\varepsilon_{(i,n)}$ are as in Lemma \ref{Lemma::epsilons}. Under the assumptions $M_n < 1$ and $R_{(l,n)} > \frac{M_n}{2 \left( 1-M_n \right)},$ Lemma \ref{Lemma::epsilons} yields
\begin{equation*}
\left| \varepsilon_{(i,n)} \right| \leq \delta_n \left(R_{(i,n)} \right)
\end{equation*}
for all $1 \leq i \leq l$. Since $R_{(i,n)} \geq R_{(l,n)}$, the quantity
\begin{equation*}
\frac{ \delta_n \left( R_{(i,n)} \right) }{R^2_{(i,n)}} = M_n + \frac{M_n}{R_{(i,n)}} +\frac{M_n}{R^2_{(i,n)}}
\end{equation*}
is bounded from above by
\begin{equation*}
a_n=M_n + \frac{M_n}{R_{(l,n)}} +\frac{M_n}{R^2_{(l,n)}}.
\end{equation*}

Assuming that $a_n <1$, by monotonicity of the logarithm, the following holds for all $1 \leq i \leq l$:
\begin{equation*}
\left| \log \left( 1+ \frac{ \varepsilon_{(i,n)}}{R^2_{(i,n)}} \right) \right| \leq \max  \left\{ \left| \log \left( 1 \pm \frac{ \delta_n \left( R_{(i,n)} \right) }{R^2_{(i,n)}} \right) \right| \right\} \leq \max \left\{ \left| \log \left( 1 \pm a_n \right) \right| \right\}.
\end{equation*}
The mean value theorem yields
\begin{equation*}
 \max \left\{ \left| \log \left( 1 \pm a_n \right) \right|\right\} = \left| \log \left( 1-a_n \right) \right| = \log \left( \frac{1}{1-a_n} \right).
\end{equation*}

Thus, under our assumptions, we obtain an upper bound for the original expression:
\begin{eqnarray*}
&&\frac{1}{2} \left| \log \left( 1+ \frac{ \varepsilon_{(i,n)}}{R^2_{(i,n)}} \right) \right| +\frac{1}{2} \left| \log \left( 1+ \frac{ \varepsilon_{(l,n)}}{R^2_{(l,n)}} \right) \right| \\ &\leq&   \frac{1}{2} \log \left( \frac{1}{1-a_n} \right)+ \frac{1}{2} \log \left( \frac{1}{1-a_n} \right) \\
&=&\log \left( \frac{1}{1-a_n} \right).
\end{eqnarray*}
\end{proof}

\subsection{Asymptotic properties of the separating Hill estimator}\label{sec::main}

\vspace{12pt}

\noindent
Equipped with the lemmas derived in the last section, we will now return to the separating Hill estimator. The bound given in Lemma \ref{Lemma::differences} suffices to control the amount by which replacing the true location and scatter by estimates distorts the value of $\hat{H}_{d}$. The conditions under which the bound is valid are asymptotically nonrestrictive.

\vspace{12pt}

\begin{theorem}\label{thm::consistency}
Let $X_1, X_2, \dots$ be a sequence of i.i.d. random variables with a $d$-dimensional elliptical distribution \eqref{eq::elliptical} that has a positive extreme value index $\gamma$. Assume that $\hat{\mu}_n$ and $\hat{\Sigma}_n$ are consistent estimators of the location and scatter of the distribution, respectively. Now, as $k_n \to \infty,$ $n \to \infty,$ $k_n /n \to 0$, we have that
\begin{equation*}
\hat{H}_{d} \left( S_n, \hat{\mu}_n, \hat{\Sigma}_n , k_n ,n \right) \to_P \gamma,
\end{equation*}
where $\hat{H}_d$ is as in \eqref{eq::sepHill}.
\end{theorem}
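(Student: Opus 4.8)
The plan is to reduce the statement to the already-known consistency of the ordinary Hill estimator built from the true Mahalanobis distances, and then to show that passing from the true distances $R_{(i,n)}$ to the estimated ones $E_{(i,n)}$ perturbs the estimator by an amount that vanishes in probability. Since $R_i = d_\Sigma(X_i,\mu)$ is equal in distribution to the generating variate $\mathcal{R}$, which by Definition \ref{def::ellipticalExtreme} has positive extreme value index $\gamma$ and hence, by Theorem \ref{thm::RVandDomain}, a distribution in $\mathcal{D}(G_\gamma)$, Theorem \ref{thm::HillConsistency} applies to the i.i.d. sample $R_1, R_2, \dots$ and gives $\hat{H}_d(S_n,\mu,\Sigma,k_n,n) \to_P \gamma$. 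It therefore suffices to prove that
\[
\Delta_n := \left| \hat{H}_d(S_n,\mu,\Sigma,k_n,n) - \hat{H}_d(S_n,\hat{\mu}_n,\hat{\Sigma}_n,k_n,n) \right| \to_P 0,
\]
after which the triangle inequality $|\hat{H}_d(S_n,\hat{\mu}_n,\hat{\Sigma}_n,k_n,n) - \gamma| \leq \Delta_n + |\hat{H}_d(S_n,\mu,\Sigma,k_n,n)-\gamma|$ closes the argument.

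To bound $\Delta_n$ I would write out both estimators from \eqref{eq::sepHill} and apply the triangle inequality term by term to obtain
\[
\Delta_n \leq \frac{1}{k_n} \sum_{i=1}^{k_n} \left| \log\left( \frac{E_{(i,n)}}{E_{(k_n+1,n)}}\right) - \log\left(\frac{R_{(i,n)}}{R_{(k_n+1,n)}}\right)\right|.
\]
This is exactly the quantity controlled by Lemma \ref{Lemma::differences} with $l = k_n+1$: on the event where the hypotheses $M_n < 1$, $R_{(k_n+1,n)} \geq \frac{M_n}{2(1-M_n)}$ and $a_n < 1$ all hold, every summand is bounded by the single number $\log(1/(1-a_n))$, so the average over the $k_n$ terms is bounded by the same number, giving $\Delta_n \leq \log(1/(1-a_n))$ on that event. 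Note that, because Lemma \ref{Lemma::differences} is a pathwise bound, this step requires no independence between the estimators and the data. The whole problem thus collapses to showing that $a_n \to_P 0$ and that the three hypotheses hold with probability tending to one.

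Both tasks rest on two facts. First, since $\hat{\mu}_n$ and $\hat{\Sigma}_n$ are consistent, continuity of matrix inversion and of the norm give $\|\mu - \hat{\mu}_n\| \to_P 0$ and $\|\Sigma^{-1} - \hat{\Sigma}_n^{-1}\| \to_P 0$, while $\|\hat{\mu}_n\|$ and $\|\hat{\Sigma}_n^{-1}\|$ stay bounded in probability; reading off the definitions of $A_n, B_n, C_n$ and \eqref{eq::Mn} then yields $M_n \to_P 0$. Second, because the tail of $\mathcal{R}$ is regularly varying with infinite right endpoint and $k_n \to \infty$, $k_n/n \to 0$, the intermediate order statistic satisfies $R_{(k_n+1,n)} \to_P \infty$; concretely, for any fixed $T$ the event $\{R_{(k_n+1,n)} \leq T\}$ is the event that a $\mathrm{Binomial}(n, 1-F(T))$ count of exceedances is at most $k_n$, and since $k_n/(n(1-F(T))) \to 0$ this probability tends to zero. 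Combining the two facts, each of $M_n$, $M_n/R_{(k_n+1,n)}$ and $M_n/R_{(k_n+1,n)}^2$ tends to zero in probability, so $a_n \to_P 0$, and likewise $P(M_n<1) \to 1$, $P(R_{(k_n+1,n)} \geq \frac{M_n}{2(1-M_n)}) \to 1$ and $P(a_n<1)\to 1$.

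Finally I would assemble the pieces with the standard high-probability bookkeeping: denoting by $G_n$ the event on which all hypotheses of Lemma \ref{Lemma::differences} hold, for any $\eta>0$ one has $P(\Delta_n > \eta) \leq P(\log(1/(1-a_n)) > \eta) + P(G_n^c)$, and both terms vanish. Hence $\Delta_n \to_P 0$ and the theorem follows. I expect the only genuinely delicate point to be the divergence $R_{(k_n+1,n)} \to_P \infty$ of the intermediate order statistic, together with the care needed to keep the pathwise bound of Lemma \ref{Lemma::differences} and the convergence of $a_n$ on the same high-probability event; the estimator-consistency part is routine continuity.
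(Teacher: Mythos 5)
Your proposal is correct and follows essentially the same route as the paper's own proof: both bound the discrepancy term by term via Lemma \ref{Lemma::differences} with $l = k_n+1$, show that its hypotheses hold with probability tending to one using $M_n \to_P 0$ and $R_{(k_n+1,n)} \to_P \infty$, conclude $a_n \to_P 0$, and finish by combining with consistency under the true location and scatter. Your only deviations are minor refinements: you supply an explicit binomial-exceedance argument for $R_{(k_n+1,n)} \to_P \infty$, which the paper merely asserts, and you replace the paper's auxiliary capped sequence $b_n$ with a direct bound on $P(\Delta_n > \eta)$.
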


\begin{proof}
Let $k_n \to \infty,$ $n \to \infty,$ and let $k_n /n \to 0$.

Let $M_n$ be defined as in Lemma \ref{Lemma::epsilons}. By definition, consistency of $\hat{\mu}_n$ and $\hat{\Sigma}_n$ imply that $M_n \to_P 0$. Since the sequence $k_n \to \infty$ is such that $k_n / n \to 0$, we have that $R_{(k_n+1, n)} \to_P \infty$. Thus the conditions
of Lemma \ref{Lemma::epsilons}
\begin{equation}\label{eq::conditions}
M_n < 1 \quad \text{and} \quad R_{(k_n+1,n)} \geq \frac{M_n}{2 \left( 1- M_n \right) }
\end{equation}
hold with a probability $p_n$ that approaches one.

Define the following auxiliary sequence
\begin{equation*}
b_n = \begin{cases}\begin{array}{ll}
 \log (2), & \text{if } a_n > \frac{1}{2} \\
 \log \left( \frac{1}{1-a_n} \right), & \text{if } a_n \leq \frac{1}{2}
\end{array}
\end{cases}
,
\end{equation*}
where, as in Lemma \ref{Lemma::differences},
\begin{equation*}
a_n = M_n + \frac{M_n}{R_{(k_n+1,n)}} + \frac{M_n}{R^2_{(k_n+1,n)}}.
\end{equation*}
Since $R_{(k_n+1,n)} \to_P \infty$ and $M_n \to_P 0$, we have that $a_n \to_P 0$, and by the continuous mapping theorem $b_n \to_P 0$. Denote by $q_n$ the probability of the event $a_n \leq \frac{1}{2}$.

Assume that the conditions \eqref{eq::conditions} hold, and that $a_n \leq \frac{1}{2}$. Then, by Lemma \ref{Lemma::differences},
\begin{eqnarray*}
&&\left|  \hat{H}_{d} \left( S_n, \hat{\mu}_n, \hat{\Sigma}_n , k_n ,n \right)- \hat{H}_{d} \left( S_n, \mu, \Sigma , k_n ,n \right) \right| \\ &=&\left| \frac{1}{k_n} \sum \limits_{i=1}^{k_n} \log \left( \frac{E_{(i,n)}}{E_{(k_n+1,n)}} \right) - \frac{1}{k_n} \sum \limits_{i=1}^{k_n} \log \left( \frac{R_{(i,n)}}{R_{(k_n+1,n)}} \right) \right| \\
&\leq& \frac{1}{k_n} \sum \limits_{i=1}^{k_n} \left| \log \left( \frac{E_{(i,n)}}{E_{(k_n+1,n)}} \right) - \log \left( \frac{R_{(i,n)}}{R_{(k_n+1,n)}} \right) \right| \leq   \log \left( \frac{1}{1-a_n} \right) \\&=&b_n.
\end{eqnarray*}

Since $p_n, q_n \to 1$, the probability for both conditions \eqref{eq::conditions} and $a_n \leq \frac{1}{2}$ holding simultaneously --- and consequently the probability of the sequence $b_n$ being an upper bound for the difference --- approaches one. The result now follows from $b_n \to_P 0$ and from the fact that the separating Hill estimator is consistent under known location and scatter as was shown in \citet{Ilmonen}.
\end{proof}

\vspace{12pt}

When it comes to the limiting distribution, the rate of convergence of $\hat{\mu}_n$ and $\hat{\Sigma}_n$ also plays a role. It turns out that them being $\sqrt{n}$-consistent is sufficient. As a familiar example, the sample mean vector and the sample covariance matrix are $\sqrt{n}$-consistent if the fourth moments of the marginal distributions are finite.

Let $\mathcal{R}$ be the generating variate of an elliptical distribution with an extreme value index $\gamma > 0$. Define $U$ of $\mathcal{R}$ as in \eqref{eq::U}. Assume that $U \in ERV2_{\gamma, \rho}$ and let $A$ be a corresponding auxiliary function introduced in Definition \ref{def::2ERV}. Below in Theorem \ref{thm::limiting} we simply refer to an elliptical distribution with parameters $\gamma, \rho$ and an auxiliary function $A.$
\vspace{12pt}

\begin{theorem}\label{thm::limiting}
Let $X_1, X_2, \dots$ be a sequence of i.i.d. random variables with a $d$-dimensional elliptical distribution with parameters $\gamma, \rho,$ and an auxiliary function $A$ (see the discussion above). Assume that $\sqrt{n}\left(\mu -\hat{\mu}_n \right)$ and $\sqrt{n}\left(\Sigma -\hat{\Sigma}_n \right)$ converge in distribution. Let $k_n \to \infty,$ $n \to \infty,$ $k_n /n \to 0$, and let
\begin{equation*}
\lambda = \lim \limits_{n \to \infty} A \left( \frac{n}{k_n} \right).
\end{equation*}
Then
\begin{equation*}
\sqrt{k_n} \left(  \hat{H}_{d} \left( S_n, \hat{\mu}_n, \hat{\Sigma}_n , k_n ,n \right) - \gamma \right) \to_D \mathcal{N} \left( \frac{\lambda}{1- \rho}, \gamma^2 \right),
\end{equation*}
where $\hat{H}_d$ is as in \eqref{eq::sepHill}.
\end{theorem}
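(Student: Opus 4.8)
The plan is to reduce the claim to the univariate Hill central limit theorem, Theorem \ref{thm::univariateHillDistribution}, by showing that replacing the true parameters $(\mu,\Sigma)$ by the estimates $(\hat{\mu}_n,\hat{\Sigma}_n)$ perturbs the statistic by a quantity that remains negligible even after multiplication by $\sqrt{k_n}$. Observe first that $\hat{H}_d(S_n,\mu,\Sigma,k_n,n)$ is exactly the univariate Hill estimator built from the true Mahalanobis distances $R_1,\dots,R_n$, which are i.i.d. and equal in distribution to the generating variate $\mathcal{R}$, whose tail function $U$ is assumed to lie in $2ERV_{\gamma,\rho}$. Hence Theorem \ref{thm::univariateHillDistribution} gives
\begin{equation*}
\sqrt{k_n}\left(\hat{H}_d(S_n,\mu,\Sigma,k_n,n)-\gamma\right)\to_D \mathcal{N}\left(\frac{\lambda}{1-\rho},\gamma^2\right).
\end{equation*}
Writing the estimated-parameter statistic as this quantity plus the difference $\sqrt{k_n}\left(\hat{H}_d(S_n,\hat{\mu}_n,\hat{\Sigma}_n,k_n,n)-\hat{H}_d(S_n,\mu,\Sigma,k_n,n)\right)$, Slutsky's theorem reduces the whole statement to showing that this difference is $o_P(1)$.

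For the difference I would reuse the machinery of Subsection \ref{sec::Lemmas} exactly as in the proof of Theorem \ref{thm::consistency}. On the event where $M_n<1$, $R_{(k_n+1,n)}\geq \frac{M_n}{2(1-M_n)}$ and $a_n\leq \tfrac12$ --- an event whose probability tends to one because $M_n\to_P 0$ and $R_{(k_n+1,n)}\to_P\infty$ --- Lemma \ref{Lemma::differences} bounds the absolute difference of the two estimators by $b_n=\log\left(\frac{1}{1-a_n}\right)$, where $a_n=M_n+\frac{M_n}{R_{(k_n+1,n)}}+\frac{M_n}{R^2_{(k_n+1,n)}}$. The crucial difference from the consistency argument is that I now need the sharper statement $\sqrt{k_n}\,b_n\to_P 0$ rather than merely $b_n\to_P 0$. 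Using the elementary inequality $\log\left(\frac{1}{1-x}\right)\leq 2x$ on $[0,\tfrac12]$ together with $R_{(k_n+1,n)}\to_P\infty$, this in turn follows once I establish $\sqrt{k_n}\,M_n\to_P 0$.

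Here the $\sqrt{n}$-consistency hypothesis enters decisively. Convergence in distribution of $\sqrt{n}(\mu-\hat{\mu}_n)$ and $\sqrt{n}(\Sigma-\hat{\Sigma}_n)$ yields $\|\mu-\hat{\mu}_n\|=O_P(n^{-1/2})$ and $\|\Sigma-\hat{\Sigma}_n\|=O_P(n^{-1/2})$. To control the inverse-scatter factor appearing in $A_n,B_n,C_n$ I would use the identity $\Sigma^{-1}-\hat{\Sigma}_n^{-1}=\Sigma^{-1}(\hat{\Sigma}_n-\Sigma)\hat{\Sigma}_n^{-1}$ together with $\hat{\Sigma}_n^{-1}\to_P\Sigma^{-1}$ (continuity of matrix inversion at the positive definite $\Sigma$), which gives $\|\Sigma^{-1}-\hat{\Sigma}_n^{-1}\|=O_P(n^{-1/2})$; likewise $\|\hat{\mu}_n\|$ and $\|\hat{\Sigma}_n^{-1}\|$ are $O_P(1)$. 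Inspecting the definition \eqref{eq::Mn}, every summand of $A_n,B_n,C_n$ is a bounded-in-probability factor times one of the two $O_P(n^{-1/2})$ norms, so $M_n=O_P(n^{-1/2})$ and hence $\sqrt{k_n}\,M_n=O_P\left(\sqrt{k_n/n}\right)=o_P(1)$, using $k_n/n\to 0$.

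To finish I would assemble the probability estimate cleanly. For fixed $\varepsilon>0$, split $P\left(\sqrt{k_n}\left|\hat{H}_d(S_n,\hat{\mu}_n,\hat{\Sigma}_n,k_n,n)-\hat{H}_d(S_n,\mu,\Sigma,k_n,n)\right|>\varepsilon\right)$ over the good event above and its complement; on the good event the difference is at most $\sqrt{k_n}\,b_n$, so that contribution is bounded by $P(\sqrt{k_n}\,b_n>\varepsilon)\to 0$, while the complement has vanishing probability. Thus the difference is $o_P(1)$ and Slutsky's theorem delivers the stated limit. The main obstacle is not the structural Slutsky argument but the quantitative rate in the previous paragraph: consistency only required $M_n\to_P 0$, whereas the central limit statement needs $M_n$ to vanish strictly faster than $k_n^{-1/2}$, and it is precisely the combination of $\sqrt{n}$-consistency --- which yields $M_n=O_P(n^{-1/2})$ only after passing from $\Sigma-\hat{\Sigma}_n$ to $\Sigma^{-1}-\hat{\Sigma}_n^{-1}$ via the resolvent identity --- with the growth restriction $k_n=o(n)$ that supplies this gain.
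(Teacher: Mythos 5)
Your proposal is correct and follows essentially the same route as the paper's proof: the same Slutsky decomposition into the known-parameter statistic plus a difference term, the same reuse of Lemma \ref{Lemma::differences} and the good event from Theorem \ref{thm::consistency}, the same resolvent identity to pass from $\sqrt{n}(\Sigma-\hat{\Sigma}_n)$ to $\sqrt{n}(\Sigma^{-1}-\hat{\Sigma}_n^{-1})$, and the same rate argument $\sqrt{k_n}M_n = O_P(\sqrt{k_n/n}) = o_P(1)$. The only cosmetic differences are that you bound $\log\left(\frac{1}{1-a_n}\right)\leq 2a_n$ where the paper expands the logarithm as a power series, and that you invoke Theorem \ref{thm::univariateHillDistribution} directly for the known-parameter case where the paper cites the corresponding result of Ilmonen et al.
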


\begin{proof}
Consider an elliptical distribution with parameters $\gamma, \rho, $ and an auxiliary function $A.$ Assume that $\left(\mu -\hat{\mu}_n \right) = O_p\left(\frac{1}{\sqrt{n}}\right)$ and that $\left(\Sigma -\hat{\Sigma}_n \right) = O_p\left(\frac{1}{\sqrt{n}}\right)$. Let $k_n \to \infty,$ $n \to \infty,$ $k_n /n \to 0$, and let
\begin{equation*}
\lambda = \lim \limits_{n \to \infty} A \left( \frac{n}{k_n} \right).
\end{equation*}

We have that
\begin{eqnarray*}
&&\sqrt{k_n} \left( \hat{H}_{d} \left( S_n, \hat{\mu}_n, \hat{\Sigma}_n , k_n ,n \right) - \gamma \right) \\ &=& \sqrt{k_n} \left( \hat{H}_{d} \left( S_n, \hat{\mu}_n, \hat{\Sigma}_n , k_n ,n \right) - \hat{H}_{d} \left( S_n, \mu_n, \Sigma_n , k_n ,n \right) \right) \\ &&+ \sqrt{k_n} \left( \hat{H}_{d} \left( S_n, \mu_n, \Sigma_n , k_n ,n \right) - \gamma \right).
\end{eqnarray*}
We now consider the first term and show that it converges to zero in probability. The claim then follows from Slutsky's theorem and the limiting normality of the separating Hill estimator under known location and scatter that was proven in \cite{Ilmonen}.

Let $a_n$ and $b_n$ be as in the proof of Theorem \ref{thm::consistency}. Then with a probability that approaches one, we have that

\begin{eqnarray*}
\left| \sqrt{k_n} \left( \hat{H}_{d} \left( S_n, \hat{\mu}_n, \hat{\Sigma}_n , k_n ,n \right) - \hat{H}_{d} \left( S_n, \mu_n, \Sigma_n , k_n ,n \right) \right) \right| \leq  \sqrt{k_n} b_n.
\end{eqnarray*}
It now suffices to show that the right hand side of the inequality above converges to zero in probability.

If $\left| a_n \right| \leq \frac{1}{2}$, then the following holds.
\begin{eqnarray*}
\sqrt{k_n} b_n = \sqrt{k_n} \, \left| \log \left( 1 - a_n \right) \right| &=&  \sqrt{k_n} \, \left|  \sum \limits_{m=1}^{\infty} \frac{\left( -1 \right)^{m+1} }{m} \left( -a_n \right)^m \right|  \\
&\leq& \sqrt{k_n} a_n +  \sqrt{k_n} \sum  \limits_{m=2}^{\infty} \left| \frac{\left( -1 \right)^{m+1} }{m} \left( - a_n \right)^m \right| \\ &\leq&  \sqrt{k_n} a_n + \sqrt{k_n} a_n \sum  \limits_{m=2}^{\infty}  \frac{1}{2^{m-1}m} \\ &=&\left( 1+ S \right) \sqrt{k_n} a_n,
\end{eqnarray*}
where $S$ is the sum of the series
\begin{equation*}
\sum \limits_{m=2}^{\infty}  \frac{1 }{2^{m-1}m} ,
\end{equation*}
that is convergent by the ratio test.

Recall that
\begin{equation*}
\sqrt{k_n}  a_n =  \sqrt{k_n} M_n + \sqrt{k_n} \frac{M_n}{R_{(k_n+1,n)}} + \sqrt{k_n} \frac{M_n}{R^2_{(k_n+1,n)} }.
\end{equation*}
Since $\sqrt{n} \left(  \hat{\Sigma}_n - \Sigma \right)$ is assumed to converge in distribution and
\begin{equation*}
\sqrt{n} \left( \hat{\Sigma}^{-1}_n - \Sigma^{-1} \right) = \hat{\Sigma}^{-1}_n \left(  \sqrt{n} \left( \Sigma -\Sigma_n \right) \right) \Sigma^{-1},
\end{equation*}
we have, by Slutsky's theorem, that $\sqrt{n} \left( \hat{\Sigma}^{-1}_n - \Sigma^{-1} \right)$ also converges in distribution.

The sequence $\sqrt{n} \, \left(  \hat{\mu}_n - \mu \right)$ converges in distribution. Since convergence in distribution implies boundedness in probability, we have that
\begin{eqnarray*}
\sqrt{k_n} \, \left(  \hat{\mu}_n - \mu \right) &=& \sqrt{\frac{k_n}{n}} \sqrt{n} \, \left(  \hat{\mu}_n - \mu \right) \to_P 0 \\ \sqrt{k_n}\left(  \hat{\Sigma}_n - \Sigma \right) &=& \sqrt{\frac{k_n}{n}} \, \sqrt{n} \left(  \hat{\Sigma}_n - \Sigma \right)  \to_P 0,
\end{eqnarray*}
as $k_n / n \to 0$.  In the expression of the sequence $M_n$,  each summand is a product of one of the differences above, and of something that is bounded in probability. Thus, by Slutsky's theorem, $\sqrt{k_n} M_n \to_P 0,$ and it follows that
\begin{equation*}
\left( 1+ S\right) \sqrt{k_n} a_n  \to_P 0.
\end{equation*}
This completes the proof.
\end{proof}

\bibliography{mybib}{}

\begin{thebibliography}{}

\bibitem[\protect\astroncite{Danielsson et~al.}{2001}]{danielsson}
Danielsson, J., L.~de~Haan, L.~Peng, and C.~G.
  de~Vries\leavevmode\nopagebreak\newline 2001.
\newblock Using a bootstrap method to choose the sample fraction in tail index
  estimation.
\newblock {\em Journal of Multivariate Analysis}, 76(2):226--248.

\bibitem[\protect\astroncite{de~Haan and Ferreira}{2006}]{deHaan}
de~Haan, L. and A.~Ferreira\leavevmode\nopagebreak\newline 2006.
\newblock {\em Extreme value theory: an introduction}.
\newblock Springer-Verlag New York.

\bibitem[\protect\astroncite{de~Haan and Peng}{1998}]{haanPeng}
de~Haan, L. and L.~Peng\leavevmode\nopagebreak\newline 1998.
\newblock Comparison of tail index estimators.
\newblock {\em Statistica Neerlandica}, 52(1):60--70.

\bibitem[\protect\astroncite{de~Haan and Resnick}{1977}]{deHaanResnickMulti}
de~Haan, L. and S.~I. Resnick\leavevmode\nopagebreak\newline 1977.
\newblock Limit theory for multivariate sample extremes.
\newblock {\em Zeitschrift f{\"u}r Wahrscheinlichkeitstheorie und verwandte
  Gebiete}, 40(4):317--337.

\bibitem[\protect\astroncite{de~Haan and Resnick}{1998}]{deHaanHillNormal}
de~Haan, L. and S.~I. Resnick\leavevmode\nopagebreak\newline 1998.
\newblock On asymptotic normality of the {H}ill estimator.
\newblock {\em Stochastic Models}, 14(4):849--867.

\bibitem[\protect\astroncite{de~Haan and Stadtm{\"u}ller}{1996}]{2erv}
de~Haan, L. and U.~Stadtm{\"u}ller\leavevmode\nopagebreak\newline 1996.
\newblock Generalized regular variation of second order.
\newblock {\em Journal of the Australian Mathematical Society (Series A)},
  61(03):381--395.

\bibitem[\protect\astroncite{Dominicy et~al.}{2015}]{Ilmonen}
Dominicy, Y., P.~Ilmonen, and D.~Veredas\leavevmode\nopagebreak\newline 2015.
\newblock Multivariate {H}ill estimators.
\newblock {\em International Statistical Review, to appear, available at SSRN
  2318957}.

\bibitem[\protect\astroncite{Fang et~al.}{1990}]{fang}
Fang, K.-T., S.~Kotz, and K.~W. Ng\leavevmode\nopagebreak\newline 1990.
\newblock {\em Symmetric multivariate and related distributions}.
\newblock Chapman and Hall.

\bibitem[\protect\astroncite{Fisher and Tippett}{1928}]{fisher}
Fisher, R.~A. and L.~H.~C. Tippett\leavevmode\nopagebreak\newline 1928.
\newblock Limiting forms of the frequency distribution of the largest or
  smallest member of a sample.
\newblock In {\em Proceedings of the Cambridge Philosophical Society},
  volume~24, Pp.~ 180--190. Cambridge University Press.

\bibitem[\protect\astroncite{Gnedenko}{1943}]{gnedenko}
Gnedenko, B.\leavevmode\nopagebreak\newline 1943.
\newblock Sur la distribution limite du terme maximum d'une serie aleatoire.
\newblock {\em Annals of Mathematics}, 44(2):423--453.

\bibitem[\protect\astroncite{Gomes et~al.}{2007}]{gomes2}
Gomes, M.~I., C.~Miranda, and C.~Viseu\leavevmode\nopagebreak\newline 2007.
\newblock Reduced-bias tail index estimation and the jackknife methodology.
\newblock {\em Statistica Neerlandica}, 61(2):243--270.

\bibitem[\protect\astroncite{Gomes and Oliveira}{2001}]{gomes}
Gomes, M.~I. and O.~Oliveira\leavevmode\nopagebreak\newline 2001.
\newblock The bootstrap methodology in statistics of extremes—choice of the
  optimal sample fraction.
\newblock {\em Extremes}, 4(4):331--358.

\bibitem[\protect\astroncite{Haeusler and Teugels}{1985}]{haeusler}
Haeusler, E. and J.~L. Teugels\leavevmode\nopagebreak\newline 1985.
\newblock On asymptotic normality of {H}ill's estimator for the exponent of
  regular variation.
\newblock {\em The Annals of Statistics}, 13(2):743--756.

\bibitem[\protect\astroncite{Hall and Welsh}{1985}]{hall2}
Hall, P. and A.~H. Welsh\leavevmode\nopagebreak\newline 1985.
\newblock Adaptive estimates of parameters of regular variation.
\newblock {\em The Annals of Statistics}, 13(1):331--341.

\bibitem[\protect\astroncite{Hill}{1975}]{Hill}
Hill, B.~M.\leavevmode\nopagebreak\newline 1975.
\newblock A simple general approach to inference about the tail of a
  distribution.
\newblock {\em The Annals of Statistics}, 3(5):1163--1174.

\bibitem[\protect\astroncite{Hult and Lindskog}{2002}]{ellipticalRV}
Hult, H. and F.~Lindskog\leavevmode\nopagebreak\newline 2002.
\newblock Multivariate extremes, aggregation and dependence in elliptical
  distributions.
\newblock {\em Advances in Applied Probability}, 34(3):587--608.

\bibitem[\protect\astroncite{Kelker}{1970}]{kelker}
Kelker, D.\leavevmode\nopagebreak\newline 1970.
\newblock Distribution theory of spherical distributions and a location-scale
  parameter generalization.
\newblock {\em Sankhy{\=a}: The Indian Journal of Statistics, Series A},
  32(4):419--430.

\bibitem[\protect\astroncite{Mahalanobis}{1936}]{mahalanobis}
Mahalanobis, P.~C.\leavevmode\nopagebreak\newline 1936.
\newblock On the generalized distance in statistics.
\newblock {\em Proceedings of the National Institute of Sciences (Calcutta)},
  2:49--55.

\bibitem[\protect\astroncite{Mason}{1982}]{Mason}
Mason, D.~M.\leavevmode\nopagebreak\newline 1982.
\newblock Laws of large numbers for sums of extreme values.
\newblock {\em The Annals of Probability}, 10(3):754--764.

\bibitem[\protect\astroncite{McLachlan}{1999}]{mahalanobis2}
McLachlan, G.~J.\leavevmode\nopagebreak\newline 1999.
\newblock Mahalanobis distance.
\newblock {\em Resonance}, 4(6):20--26.

\bibitem[\protect\astroncite{Sibuya}{1959}]{Sibuya}
Sibuya, M.\leavevmode\nopagebreak\newline 1959.
\newblock Bivariate extreme statistics, {I}.
\newblock {\em Annals of the Institute of Statistical Mathematics},
  11(2):195--210.

\bibitem[\protect\astroncite{Tiago~de Oliveira}{1980}]{Tiago}
Tiago~de Oliveira, J.\leavevmode\nopagebreak\newline 1980.
\newblock Bivariate extremes: foundations and statistics.
\newblock {\em Multivariate Analysis, V, Proceedings of the 5th International
  Symposium, University of Pittsburgh}, Pp.~ 349--366.

\end{thebibliography}
\bibliographystyle{humannat}

\end{document}